\date{}
\newtheorem{Theorem}{Theorem}[section]
\theoremstyle{definition}
\newtheorem{Definition}[Theorem]{Definition}
\theoremstyle{remark}
\newtheorem{Remark}[Theorem]{Remark}
\numberwithin{equation}{section}
\title{On a class of transformations of sequences of complex numbers}
\author{Ilia D. Mishev
\footnote{Department of Mathematics, University of Colorado at Boulder,
Campus Box 395, Boulder, CO 80309-0395, U.S.A.
E-mail address: ilia.mishev@colorado.edu}}
\begin{document}

\maketitle

\begin{abstract}

In this paper we consider a transformation $L_a$ of sequences of complex numbers. We find the inverse transformation of $L_a$ as well as the inverse of a related transformation $\tilde{L}_a$. We explore a connection to the binomial transform and significantly generalize a previously known result. We also obtain new relations among many classical hypergeometric orthogonal polynomials as well as new formulas for sums involving terminating hypergeometric series.

\end{abstract}

\section{Introduction}

Let $\omega$ denote the complex linear space of all sequences
$x=(x_n)_{n=0}^{\infty}$ of complex numbers. Let $a_{n,k}$ be 
complex numbers, where $n$ and $k$ are integers with $n \geq 0, 0 \leq k \leq n$. We
can define a linear transformation
\begin{eqnarray*}
&&L:\omega \to \omega \\
&&x=(x_n)_{n=0}^{\infty} \mapsto L(x)=(L(x)_n)_{n=0}^{\infty}
\end{eqnarray*}
by
\begin{equation}
\label{103} 
L(x)_n=\sum_{k=0}^n a_{n,k} x_k, \quad n \geq 0.
\end{equation}

One classical example of such a transformation is the binomial transform
(introduced by Knuth in \cite{Knuth}) defined by
$$\hat{x}_n=\sum_{k=0}^n (-1)^k \binom{n}{k} x_k,
\quad n \geq 0.$$

Given a transformation $L$ of the kind defined in (\ref{103}),
$L$ is invertible if and only if $a_{n,n} \neq 0$ for all $n \geq 0$.
If $L$ is invertible, its inverse transformation $L^{-1}$ is of the form 
$$L^{-1}(x)_n=\sum_{k=0}^n b_{n,k} x_k, \quad n \geq 0,$$
for some $b_{n,k} \in \mathbb{C}$ with $n \geq 0, 0 \leq k \leq n$. 
The inverse transformation can also be written as
$$x_n=\sum_{k=0}^n b_{n,k} L(x)_k, \quad n \geq 0.$$

As an example, the binomial transform is invertible and is equal to
its own inverse (see \cite{Rior}), which can be expressed by
$$x_n=\sum_{k=0}^n (-1)^k \binom{n}{k} \hat{x}_k,
\quad n \geq 0.$$

\begin{Remark}
\label{R110}
We note that if
$$y_n=\sum_{k=0}^n a_{n,k} x_k \Leftrightarrow
x_n=\sum_{k=0}^n b_{n,k} y_k$$
is a pair of inverse transformations and 
$(c_n)_{n=0}^{\infty}$ is a sequence of nonzero complex numbers, then
$$y_n=\sum_{k=0}^n a_{n,k} c_k x_k \Leftrightarrow
x_n=\frac{1}{c_n}\sum_{k=0}^n b_{n,k} y_k$$
is another pair of inverse transformations. 
\end{Remark}

A large number of pairs of inverse transformations are
examined by Riordan in \cite{Rior}.

In this paper, we will consider the following family of transformations:

\begin{Definition}
\label{D110}
Let $a \in \mathbb{C} \backslash 
\{-1, -2, -3, \ldots\}$. We define the transformation 
\begin{eqnarray*}
&&L_a:\omega \to \omega \\
&&x=(x_n)_{n=0}^{\infty} \mapsto L_a(x)=(L_a(x)_n)_{n=0}^{\infty}
\end{eqnarray*}
by
\begin{equation}
\label{120}
L_a(x)_n=\sum_{k=0}^n (-n)_k (n+a)_k x_k, \quad n \geq 0,
\end{equation}
where,
for $\gamma \in \mathbb{C}$ and $k$ a nonnegative integer,
the rising factorial $(\gamma)_k$ is given by
\begin{equation*}
(\gamma)_k=\left\{ \begin{array}{ll}
\gamma(\gamma+1)\cdots(\gamma+k-1), & k>0,\\
1, & k=0.
\end{array} \right.
\end{equation*}
\end{Definition}

We require that $a \notin \{-1, -2, -3, \ldots\}$ since
otherwise the transformation $L_a$ is not invertible as in that case
$(n+a)_n=0$ for $1 \leq n \leq |a|$.

The transformation $L_a$ arises naturally from the definitions of
many classical hypergeometric orthogonal polynomials. It also comes up in the summation formulas
of many terminating hypergeometric series.

\begin{Remark}
\label{R120}
From the inverse transformation of the binomial transform, Remark \ref{R110}, and the formula
$$(-1)^k\binom{n}{k}
=\frac{(-n)_k}{k!},$$
we obtain the pair of inverse transformations
$$y_n=\sum_{k=0}^n (-n)_k x_k \Leftrightarrow
x_n=\frac{1}{n!}
\sum_{k=0}^n \frac{(-n)_k}{k!} y_k.$$
The transformation $L_a$ given in Definition \ref{D110} has the form
$$y_n=\sum_{k=0}^n (-n)_k (n+a)_k x_k, \quad
a \in \mathbb{C} \backslash 
\{-1, -2, -3, \ldots\}.$$
A related transformation $\tilde{L}_a$ (see Definition \ref{D310}) that we will also study has the form
$$y_n=\sum_{k=0}^n \frac{(-n)_k}{(1+a+n)_k} x_k, \quad
a \in \mathbb{C} \backslash 
\{-2, -3, -4, \ldots\}.$$
\end{Remark}

In Section 3 we will find the inverse of the $L_a$ transformation using
a classical summation formula due to Dixon. 
We will also find the inverse of the related $\tilde{L}_a$ transformation.

The connection to the binomial transform 
of a slightly modified version
$L_{a,b}$ (see Definition \ref{D410}) of
the $L_a$ transformation 
is studied in Section 4. 
Theorem \ref{T410} gives a result that
significantly generalizes a previously known special case
described in Remark \ref{R420}. 

In Section 5 we will apply the inverse 
transformation of $L_a$ to deduce new relations for many classical 
hypergeometric orthogonal polynomials. 
In particular, we derive relations for the Wilson, Racah, 
continuous Hahn, Hahn, and Jacobi polynomials as well as for the special cases of 
the Jacobi polynomials given by the Gegenbauer (or ultraspherical) polynomials, 
the Chebyshev polynomials of the first and second kind, 
and the Legendre (or spherical) polynomials. We also use Theorem \ref{T410} to demonstrate other relations for some of the orthogonal polynomials.

In Section 6 we will use the inverse transformations of $L_a$ and $\tilde{L}_a$ to derive new formulas for sums that involve terminating ${}_4F_3(1)$ hypergeometric series and sums that involve terminating ${}_5F_4(1)$ hypergeometric series. These new summation formulas are given in (\ref{610}), (\ref{620}), (\ref{630}), and (\ref{640}).

\section{Hypergeometric series}

Let $p$ and $q$ be nonnegative integers. 
Let $a_1, a_2, \ldots, a_p, b_1, b_2, \ldots, b_q, z \in \mathbb{C}$.
The hypergeometric series of type ${}_pF_q$ that has 
numerator parameters $a_1, a_2, \ldots, a_p$ and 
denominator parameters $b_1, b_2, \ldots, b_q$ is defined by
\begin{equation}
\label{210}
 {}_pF_q \left( \left. {\displaystyle a_1,a_2,\ldots,a_p
\atop \displaystyle b_1,b_2,\ldots,b_q} \right|
z\right) =
\sum_{n=0}^{\infty} \frac{(a_1)_n(a_2)_n \cdots
(a_p)_n}{n!(b_1)_n(b_2)_n \cdots (b_q)_n}z^n.
\end{equation}

If no numerator parameter is a negative integer or zero, 
we need no denominator parameter to be a negative integer or zero.
In this case, the series in (\ref{210}) 
converges absolutely for all $z$ if $p<q+1$. If $p>q+1$, the series
converges only when $z=0$. In the case $p=q+1$, 
the series converges absolutely if $|z|<1$ or if
$|z|=1$ and $\textrm{Re}(\sum_{i=1}^qb_i-\sum_{i=1}^{q+1}a_i)>0$ 
(see \cite[p.\ 8]{Ba}).

If a numerator parameter is a negative integer or zero, then, 
letting $-n$ be the nonpositive integer
numerator parameter of least absolute value, 
only the first $n+1$ terms of the series (\ref{210}) 
are nonzero and the series
is said to terminate. In this case, we require that no 
denominator parameter be in the set
$\{-n+1, -n+2, \ldots, 0 \}$. We note that (\ref{210}) 
reduces to a polynomial in $z$ of degree $n$. 

When $z=1$, we say that the series is of unit argument and of type
${}_pF_q(1)$. If $\sum_{i=1}^qb_i-\sum_{i=1}^pa_i=1$, the
series is called Saalsch\"utzian. In the case $p=q+1$, if
$1+a_1=b_1+a_2=\cdots=b_q+a_{q+1}$, the series is called 
well-poised. A well-poised series that satisfies
$a_2=1+\frac{1}{2}a_1$ is called very-well-poised.

In deriving the inverse transformation of $L_a$, we will make use
of a version of Dixon's theorem (see \cite[p.\ 13]{Ba}) that gives
the sum of a terminating well-poised ${}_3F_2(1)$ series:
\begin{equation}
\label{220}
{}_3F_2 \left( \left. {\displaystyle a,b,-n
\atop \displaystyle 1+a-b,1+a+n}\right| 
1\right) 
=\frac{(1+a)_n\left(1+\frac{a}{2}-b\right)_n}
{\left(1+\frac{a}{2}\right)_n(1+a-b)_n}. 
\end{equation}
 
We will also use the Chu-Vandermonde formula 
(see \cite[p.\ 3]{Ba})
for the sum of a terminating ${}_2F_1(1)$ series:
\begin{equation}
\label{230}
{}_2F_1 \left( \left.{\displaystyle -n,a
\atop \displaystyle b} \right|
1\right) 
=\frac{(b-a)_n}{(b)_n}.
\end{equation}

A terminating hypergeometric series with a nonpositive integer
numerator parameter of least absolute value equal to $-n$ can be considered as a transformation
of the form (\ref{103}). As an example, certain terminating
hypergeometric series can be considered as binomial transforms. Indeed,
Eq.\ (\ref{230})
can be written as
\begin{equation*}
\sum_{k=0}^n (-1)^k \binom{n}{k} \frac{(a)_k}{(b)_k}
=\frac{(b-a)_n}{(b)_n},
\end{equation*}
and therefore we can conclude that the binomial transform of the sequence
$\left(\frac{(a)_n}{(b)_n}\right)_{n=0}^{\infty}$ is the sequence
$\left(\frac{(b-a)_n}{(b)_n}\right)_{n=0}^{\infty}$.

\section{The inverse of the $L_a$ transformation}

In this section we first prove a formula for the inverse of the $L_a$ transformation.
The inverse formula will be used in Sections 5 and 6 to obtain relations
among classical orthogonal orthogonal polynomials as well as formulas for sums that involve terminating 
hypergeometric series.

\begin{Theorem}
\label{T310}
Let $x=(x_n)_{n=0}^{\infty}$ be a sequence of complex numbers.
\begin{enumerate}
\item[(a)] For $a \in \mathbb{C} \backslash
\{0, -1, -2, -3, \ldots\}$,
\begin{equation}
\label{310}
L_a^{-1}(x)_n=\frac{1}{n!(1+a)_n}
\sum_{k=0}^n \frac{(a)_k \left(1+\frac{a}{2}\right)_k (-n)_k}
{k! \left(\frac{a}{2}\right)_k (1+a+n)_k} x_k, \quad n \geq 0. 
\end{equation}
\item[(b)] When $a=0$,
\begin{equation}
\label{320}
L_0^{-1}(x)_n=\frac{1}{(n!)^2}
\left( -x_0 + 2\sum_{k=0}^n \frac{(-n)_k}{(1+n)_k} x_k \right), 
\quad n \geq 0. 
\end{equation}
\end{enumerate}
 
\end{Theorem}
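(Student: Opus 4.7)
The plan is to verify part (a) by direct composition---applying the proposed inverse $M$ given by the right-hand side of (\ref{310}) to the sequence $L_a(x)$ and showing that it returns $x$---and then to obtain part (b) as the limit $a \to 0$ of part (a). Since both $L_a$ and $M$ are lower triangular transformations of $\omega$, the identity $M \circ L_a = \mathrm{id}$ will imply $M = L_a^{-1}$.

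For part (a), substituting $L_a(x)_k = \sum_{j=0}^k (-k)_j (k+a)_j x_j$ and swapping the order of summation reduces the task to proving, for $0 \le j \le n$, the orthogonality identity
$$\sum_{k=j}^n \frac{(a)_k (1+a/2)_k (-n)_k}{k!\,(a/2)_k (1+a+n)_k}\,(-k)_j (k+a)_j = \delta_{j,n}\, n!(1+a)_n.$$
The key step is to recognize this inner sum as a terminating very-well-poised ${}_3F_2(1)$. Using the Pochhammer identities $(-k)_j = (-1)^j k!/(k-j)!$, $(k+a)_j = (a)_{k+j}/(a)_k$, and $(1+a/2)_k/(a/2)_k = (a+2k)/a$, together with the index shift $m=k-j$ and the factorizations $(-n)_{m+j} = (-n)_j (j-n)_m$, $(1+a+n)_{m+j} = (1+a+n)_j (1+a+n+j)_m$, $(a)_{m+2j} = (a)_{2j}(a+2j)_m$, and $(a+2j+2m) = (a+2j)(1+a/2+j)_m/(a/2+j)_m$, the inner sum rewrites as
$$\frac{(-1)^j(-n)_j(a)_{2j}(a+2j)}{a(1+a+n)_j}\; {}_3F_2\!\left(\!\left.\begin{array}{c} a+2j,\,1+a/2+j,\,j-n \\ a/2+j,\,1+a+n+j\end{array}\right|1\right).$$

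Applying Dixon's formula (\ref{220}) with $A=a+2j$, $B=1+a/2+j$, $N=n-j$ (a case where the very-well-poised relation $B = 1+A/2$ holds, so $1+A/2-B=0$) produces a factor of $(0)_{n-j}$ in the numerator, which vanishes for $j<n$ and equals $1$ for $j=n$. Thus only the diagonal term survives, and in that case the identities $(-n)_n=(-1)^n n!$ and $(a)_{2n}(a+2n)=(a)_{2n+1}=a(1+a)_{2n}=a(1+a)_n(1+a+n)_n$ reduce the coefficient to exactly $n!(1+a)_n$, which cancels the prefactor $1/(n!(1+a)_n)$ and yields $M(L_a(x))_n = x_n$.

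For part (b), the entries of $L_a$ and $L_a^{-1}$ depend rationally on $a$ with no pole at $a=0$, so (\ref{320}) follows from taking $a \to 0$ in (\ref{310}). The subtlety is that $(a/2)_k$ vanishes at $a=0$ for $k\ge 1$; however $\lim_{a\to 0}(a)_k(1+a/2)_k/(a/2)_k = 2\,k!$ for $k\ge 1$ (while the $k=0$ summand contributes $1$), so the limit of the $k$-th term in (\ref{310}) equals $\frac{2}{(n!)^2}\cdot \frac{(-n)_k}{(1+n)_k}x_k$ for $k\ge 1$ and $\frac{1}{(n!)^2}x_0$ for $k=0$. Rewriting the $k=0$ contribution as $-x_0+2x_0$ and reassembling the sum gives exactly (\ref{320}).

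The main obstacle is the Pochhammer bookkeeping needed to expose the very-well-poised structure; once the inner sum is identified as a ${}_3F_2(1)$ in which $B$ coincides with $1+A/2$, Dixon's formula collapses everything to the diagonal $\delta_{j,n}$ contribution.
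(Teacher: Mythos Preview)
Your proposal is correct and follows essentially the same approach as the paper: substitute $L_a(x)_k$ into the candidate inverse, swap sums, shift the index, recognize the inner sum as the terminating well-poised ${}_3F_2(1)$ with top parameters $a+2j,\,1+a/2+j,\,-(n-j)$, and apply Dixon's formula (\ref{220}) to collapse it to $\delta_{j,n}$; part (b) is then the limit $a\to 0$. Your write-up is somewhat more explicit than the paper's about the Pochhammer bookkeeping and about why the factor $(0)_{n-j}$ forces vanishing off the diagonal, but the structure of the argument is the same.
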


\begin{proof}

\begin{enumerate}

\item[(a)]
When $a \in \mathbb{C} \backslash \{0, -1, -2, -3, \ldots\}$,
we have
\begin{eqnarray*}
&&\frac{1}{n!(1+a)_n}
\sum_{k=0}^n \frac{(a)_k \left(1+\frac{a}{2}\right)_k (-n)_k}
{k! \left(\frac{a}{2}\right)_k (1+a+n)_k} L_a(x)_k \\
&&=\frac{1}{n!(1+a)_n}
\sum_{k=0}^n \left( \frac{(a)_k \left(1+\frac{a}{2}\right)_k (-n)_k}
{k! \left(\frac{a}{2}\right)_k (1+a+n)_k} 
\sum_{m=0}^k (-k)_m (k+a)_m x_k \right) \\
&&=\frac{1}{n!(1+a)_n}
\sum_{m=0}^n\sum_{k=m}^n
\frac{(-1)^m (a)_{k+m} \left(1+\frac{a}{2}\right)_k (-n)_k}
{(k-m)! \left(\frac{a}{2}\right)_k (1+a+n)_k} x_m \\
&&=\frac{1}{n!(1+a)_n}
\sum_{m=0}^n\sum_{t=0}^{n-m}
\frac{(-1)^m (a)_{2m+t} \left(1+\frac{a}{2}\right)_{m+t} (-n)_{m+t}}
{t! \left(\frac{a}{2}\right)_{m+t} (1+a+n)_{m+t}} x_m \\
&&=\frac{1}{n!(1+a)_n}
\sum_{m=0}^n \left(
\frac{(-1)^m (a)_{2m} \left(1+\frac{a}{2}\right)_m (-n)_m}
{\left(\frac{a}{2}\right)_m (1+a+n)_m} \right. \\
&&\left. \times
{}_3F_2 \left( \left. {\displaystyle a+2m,1+\frac{a}{2}+m,-(n-m)
\atop \displaystyle \frac{a}{2}+m,1+a+n+m} \right|
1\right) 
x_m \right).
\end{eqnarray*}
By Eq.\ (\ref{220}),
\begin{equation*}
{}_3F_2 \left( \left.{\displaystyle a+2m,1+\frac{a}{2}+m,-(n-m)
\atop \displaystyle \frac{a}{2}+m,1+a+n+m} \right| 
1\right) 
=\left\{ \begin{array}{ll}
0, & 0 \leq m < n,\\
1, & m=n.
\end{array} \right.
\end{equation*}
Hence
\begin{eqnarray*}
&&\frac{1}{n!(1+a)_n}
\sum_{k=0}^n \frac{(a)_k \left(1+\frac{a}{2}\right)_k (-n)_k}
{k! \left(\frac{a}{2}\right)_k (1+a+n)_k} L_a(x)_k \\
&&=\frac{(-1)^n (a)_{2n} \left(1+\frac{a}{2}\right)_n (-n)_n}
{n!(1+a)_n\left(\frac{a}{2}\right)_n(1+a+n)_n} x_n \\
&&=\frac{(a)_{2n}\left(1+\frac{a}{2}\right)_n}
{(1+a)_{2n}\left(\frac{a}{2}\right)_n} x_n \\
&&=x_n,
\end{eqnarray*}
which proves part (a).

\item[(b)]
Let $a=0$. Taking the limit as $a \to 0$ on the
right-hand side of (\ref{310}), we obtain,
for $n \geq 0$,
\begin{eqnarray*}
&&L_0^{-1}(x)_n
=\lim_{a \to 0}
\frac{1}{n!(1+a)_n}
\sum_{k=0}^n \frac{(a)_k \left(1+\frac{a}{2}\right)_k (-n)_k}
{k! \left(\frac{a}{2}\right)_k (1+a+n)_k} x_k\\
&&=\lim_{a \to 0}
\frac{1}{n!(1+a)_n}
\left( x_0 +
\sum_{k=1}^n \frac{(a)_k \left(1+\frac{a}{2}\right)_k (-n)_k}
{k! \left(\frac{a}{2}\right)_k (1+a+n)_k} x_k \right)\\
&&=\lim_{a \to 0}
\frac{1}{n!(1+a)_n}
\left( x_0 +
2\sum_{k=1}^n \frac{(a+1)_{k-1} \left(1+\frac{a}{2}\right)_k (-n)_k}
{k! \left(1+\frac{a}{2}\right)_{k-1} (1+a+n)_k} x_k \right)\\
&&=\frac{1}{(n!)^2}
\left( x_0 +
2\sum_{k=1}^n \frac{(k-1)! k! (-n)_k}
{k! (k-1)! (1+n)_k} x_k \right)\\
&&=\frac{1}{(n!)^2}
\left( x_0 +
2\sum_{k=1}^n \frac{(-n)_k}
{(1+n)_k} x_k \right)\\
&&=\frac{1}{(n!)^2}
\left( -x_0 +
2\sum_{k=0}^n \frac{(-n)_k}
{(1+n)_k} x_k \right).
\end{eqnarray*}

\end{enumerate}
 
\end{proof}

Considering Theorem \ref{T310}, we define the following related transformation to $L_a$:

\begin{Definition}
\label{D310}
Let $a \in \mathbb{C} \backslash
\{-2, -3, -4, \ldots\}$. We define the transformation 
\begin{eqnarray*}
&&\tilde{L}_a:\omega \to \omega \\
&&x=(x_n)_{n=0}^{\infty} \mapsto 
\tilde{L}_a(x)=(\tilde{L}_a(x)_n)_{n=0}^{\infty}
\end{eqnarray*}
by
\begin{equation}
\label{330}
\tilde{L}_a(x)_n=\sum_{k=0}^n 
\frac{(-n)_k}{(1+a+n)_k} x_k, \quad n \geq 0,
\end{equation}
\end{Definition}

We need the restriction 
$a \notin 
\{-2, -3, -4, \ldots\}$ so that the
$\tilde{L}_a$ transformation is well-defined.
The inverse of the $\tilde{L}_a$ transformation is
given in the next theorem:

\begin{Theorem}
\label{T320}
Let $x=(x_n)_{n=0}^{\infty}$ be a sequence of complex numbers.
\begin{enumerate}
\item[(a)] For $a \in \mathbb{C} \backslash
\{0, -1, -2, -3, \ldots\}$,
\begin{equation}
\label{340}
\tilde{L}_a^{-1}(x)_n=\frac{(a)_n \left(1+\frac{a}{2}\right)_n}
{n!\left(\frac{a}{2}\right)_n}
\sum_{k=0}^n \frac{(-n)_k (n+a)_k}
{k! (1+a)_k} x_k, \quad n \geq 0. 
\end{equation}
\item[(b)] When $a=0$,
\begin{equation}
\label{350}
\tilde{L}_0^{-1}(x)_n=\left\{
\begin{array}{ll}
x_0, & n=0,\\
2\sum_{k=0}^n
\frac{(-n)_k(n)_k}{k!k!}x_k,
& n>0.
\end{array}
\right.
\end{equation}
\item[(c)] When $a=-1$,
\begin{equation}
\label{360}
\tilde{L}_{-1}^{-1}(x)_n=\left\{
\begin{array}{ll}
x_0, & n=0,\\
(2n-1)\sum_{k=0}^n
\frac{(-1)^k(k)_{n-1}}{(n-k)!k!}x_k,
& n>0.
\end{array}
\right.
\end{equation}
\end{enumerate}
 
\end{Theorem}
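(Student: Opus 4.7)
The plan for part (a) is to verify the inverse relation directly by substitution, mirroring the proof of Theorem \ref{T310}(a). I substitute the definition (\ref{330}) of $\tilde{L}_a$ into the right-hand side of (\ref{340}) evaluated at $\tilde{L}_a(x)$, swap the order of the two summations, and make the index substitution $t = k - m$ in the inner sum.

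Using the Pochhammer identities $(-(m+t))_m = (-1)^m(m+t)!/t!$ and $(1+a)_{m+t}(1+a+m+t)_m = (1+a)_{2m+t}$, together with the splittings $(-n)_{m+t} = (-n)_m(m-n)_t$, $(n+a)_{m+t} = (n+a)_m(n+a+m)_t$, and $(1+a)_{2m+t} = (1+a)_{2m}(1+a+2m)_t$, the inner sum over $t$ becomes
\[
\frac{(-n)_m(n+a)_m}{(1+a)_{2m}} \cdot {}_2F_1 \! \left( \left. {\displaystyle -(n-m),\,n+a+m \atop \displaystyle 1+a+2m} \right| 1 \right).
\]
Applying the Chu-Vandermonde formula (\ref{230}) gives $(1+m-n)_{n-m}/(1+a+2m)_{n-m}$, whose numerator vanishes for $0 \leq m < n$ and equals $1$ for $m = n$. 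Hence only the $m = n$ term survives, and the remaining coefficient collapses to $1$ via $(-n)_n = (-1)^n n!$, $(a)_n(n+a)_n = (a)_{2n}$, and $(1+a/2)_n/(a/2)_n = (a+2n)/a$, producing $x_n$.

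Parts (b) and (c) then follow as limits of the formula in (a). For $a \to 0$, rewriting $(a)_n/(a/2)_n = 2(1+a)_{n-1}/(1+a/2)_{n-1}$ for $n \geq 1$ lets the prefactor tend to $2$, while the summand tends to $(-n)_k(n)_k/(k!)^2$, yielding (\ref{350}); the $n = 0$ case is immediate. The main obstacle is part (c): for $a \to -1$ and $n \geq 2$, both the prefactor $(a)_n$ (via the factor $(1+a)$) and the denominator $(1+a)_k$ of each summand with $k \geq 1$ vanish. Factoring $(a)_n = a(1+a)(2+a)_{n-2}$ and $(1+a)_k = (1+a)(2+a)_{k-1}$ shows that the $(1+a)$ factors cancel for $k \geq 1$, while the $k = 0$ summand is annihilated by the residual $(1+a)$ in the prefactor. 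Collecting the finite surviving pieces and using $(-n)_k = (-1)^k n!/(n-k)!$ together with the identity $(n-2)!(n-1)_k = (k)_{n-1}(k-1)!$ (both equal $(n+k-2)!$) puts the result in the form (\ref{360}); the $n = 1$ case is checked directly from the formula in (a).
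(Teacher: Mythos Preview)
Your proof is correct, but your route to part (a) differs from the paper's. The paper does not verify (\ref{340}) from scratch; instead it observes that $\tilde L_a$ and $L_a$ are related by multiplication of the summand by the nonzero sequence $c_k=k!\,(a)_k(1+a/2)_k\big/\bigl((a/2)_k(1+a+n)_k\bigr)$-type factors, so Remark~\ref{R110} applied to the already-proved inverse (\ref{310}) of $L_a$ immediately yields (\ref{340}). Your direct verification is more self-contained and, notably, relies only on the Chu--Vandermonde formula (\ref{230}) rather than on Dixon's theorem (\ref{220}) through Theorem~\ref{T310}; this is a genuine simplification of the underlying hypergeometric input, at the cost of repeating a double-sum manipulation the paper had already done once. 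One small slip: your displayed inner sum is missing a factor $(-1)^m$ coming from $(-k)_m=(-1)^m k!/(k-m)!$; this is harmless because only $m=n$ survives and you do track the sign correctly in the final collapse. Parts (b) and (c) are obtained in the paper exactly as you do, by taking $a\to 0$ and $a\to -1$ in (\ref{340}); your cancellation of the $(1+a)$ factors and the identity $(n-2)!\,(n-1)_k=(k)_{n-1}(k-1)!$ reproduce the paper's computation.
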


\begin{proof}
Part (a) follows from (\ref{310}) and Remark \ref{R110}. 
Part (b) follows by taking the limit as $a \to 0$
in part (a) in a similar way as was done in the proof of part (b)
of Theorem \ref{310}. 

To prove part (c), let $a=-1$. 
From the definition of $\tilde{L}_{-1}(x)_n$, it is
clear that $\tilde{L}_{-1}(x)_0=x_0$. Now assume $n>0$.
Taking the limit as $a \to -1$ on the
right-hand side of (\ref{340}), we obtain,
for $n > 0$,
\begin{eqnarray*}
&&\tilde{L}_{-1}^{-1}(x)_n
=\lim_{a \to -1}
\frac{(a)_n \left(1+\frac{a}{2}\right)_n}
{n!\left(\frac{a}{2}\right)_n}
\sum_{k=0}^n \frac{(-n)_k (n+a)_k}
{k! (1+a)_k} x_k\\
&&=\lim_{a \to -1}
\frac{2(1+a)_{n-1} \left(1+\frac{a}{2}\right)_n}
{n!\left(1+\frac{a}{2}\right)_{n-1}}
\left(
\sum_{k=0}^{n-1} \frac{(-n)_k (n+a)_k}
{k! (1+a)_k} x_k 
+\frac{(-n)_n (n+a)_n}
{n! (1+a)_n} x_n
\right)\\
&&=\lim_{a \to -1}
\frac{2(1+a)_{n-1} \left(n+\frac{a}{2}\right)}
{n!}
\left(
\sum_{k=0}^{n-1} \frac{(-n)(n+a)_k}
{k! (1+a)_k} x_k 
+\frac{(-n)_n (n+a)_n}
{n! (1+a)_n} x_n
\right)\\
&&=\lim_{a \to -1}
\frac{2n+a}
{n!}
\left(
\sum_{k=0}^{n-1} \frac{(-n)_k (n+a)_k (1+a+k)_{n-1-k}}
{k!} x_k \right.\\
&&\left. +\frac{(-n)_n (n+a)_n}
{n! (n+a)} x_n
\right)\\
&&=\lim_{a \to -1}
\frac{2n+a}
{n!}
\left(
\sum_{k=0}^{n-1} \frac{(-n)_k (n+a)_k (1+a+k)_{n-1-k}}
{k!} x_k \right.\\
&&\left. +\frac{(-n)_n (n+a+1)_{n-1}}
{n!} x_n
\right)\\
&&=\frac{2n-1}
{n!}
\left(
\sum_{k=0}^{n-1} \frac{(-n)_k (n-1)_k (k)_{n-1-k}}
{k!} x_k 
+\frac{(-n)_n (n)_{n-1}}
{n!} x_n
\right)\\
&&=\frac{2n-1}
{n!}
\left(
\sum_{k=0}^{n-1} \frac{(-n)_k (k)_{n-1}}
{k!} x_k 
+\frac{(-n)_n (n)_{n-1}}
{n!} x_n
\right)\\
&&=(2n-1)
\left(
\sum_{k=0}^{n-1} \frac{(-1)^k (k)_{n-1}}
{(n-k)!k!} x_k 
+\frac{(-1)^n (n)_{n-1}}
{n!} x_n
\right)\\
&&=(2n-1)\sum_{k=0}^n
\frac{(-1)^k(k)_{n-1}}{(n-k)!k!}x_k.
\end{eqnarray*}

\end{proof}

We remark that (\ref{360}) can also be written as
\begin{equation}
\label{370}
\tilde{L}_{-1}^{-1}(x)_n=\left\{
\begin{array}{ll}
x_0, & n=0,\\
x_0-x_1, & n=1,\\
(1-2n)\sum_{k=0}^{n-1}
\frac{(1-n)_k(n)_k}{k!(2)_k}x_{k+1},
& n>1.
\end{array}
\right.
\end{equation}
Indeed, when $n=1$,
\begin{equation*}
(2n-1)\sum_{k=0}^n
\frac{(-1)^k(k)_{n-1}}{(n-k)!k!}x_k
=\sum_{k=0}^1 
\frac{(-1)^k}{(1-k)!k!}x_k
=x_0-x_1.
\end{equation*}
When $n>1$,
\begin{eqnarray*}
&&(2n-1)\sum_{k=0}^n
\frac{(-1)^k(k)_{n-1}}{(n-k)!k!}x_k\\
&&=(2n-1)\sum_{k=1}^n
\frac{(-1)^k(n+k-2)!}{(n-k)!(k-1)!k!}x_k\\ 
&&=(2n-1)\sum_{k=1}^n
\frac{(-1)^k(n-k+1)_{2k-2}}{(k-1)!k!}x_k\\
&&=(2n-1)\sum_{k=1}^n
\frac{(-1)^k(n-k+1)_{k-1}(n)_{k-1}}{(k-1)!k!}x_k\\
&&=(2n-1)\sum_{k=1}^n
\frac{(-1)^k(-1)^{k-1}(1-n)_{k-1}(n)_{k-1}}{(k-1)!k!}x_k\\
&&=(1-2n)\sum_{k=0}^{n-1}
\frac{(1-n)_k(n)_k}{k!(k+1)!}x_{k+1}\\
&&=(1-2n)\sum_{k=0}^{n-1}
\frac{(1-n)_k(n)_k}{k!(2)_k}x_{k+1}.
\end{eqnarray*}

\section{Connection to the binomial transform}

In this section we explore a connection with the binomial transform.
For this purpose, we need to modify slightly the definition of
the $L_a$ transformation and make the following definition:

\begin{Definition}
\label{D410}
Let $a,b \in \mathbb{C}, a \notin 
\{-1, -2, -3, \ldots\}, b \notin 
\{0, -1, -2, -3, \ldots\}$. We define the transformation 
\begin{eqnarray*}
&&L_{a,b}:\omega \to \omega \\
&&x=(x_n)_{n=0}^{\infty} \mapsto L_{a,b}(x)=(L_{a,b}(x)_n)_{n=0}^{\infty}
\end{eqnarray*}
by
\begin{equation}
\label{410}
L_{a,b}(x)_n=\sum_{k=0}^n 
\frac{(-n)_k (n+a)_k}{k! (b)_k} x_k, \quad n \geq 0.
\end{equation}
\end{Definition}

We note that by Theorem \ref{T310}, it follows that
the inverse of the $L_{a,b}$ transformation is given by:
\begin{enumerate}
\item[(a)] For $a \in \mathbb{C} \backslash
\{0, -1, -2, -3, \ldots\}$,
\begin{equation}
\label{420}
L_{a,b}^{-1}(x)_n=\frac{(b)_n}{(1+a)_n}
\sum_{k=0}^n \frac{(a)_k \left(1+\frac{a}{2}\right)_k (-n)_k}
{k! \left(\frac{a}{2}\right)_k (1+a+n)_k} x_k, \quad n \geq 0. 
\end{equation}
\item[(b)] When $a=0$,
\begin{equation}
\label{430}
L_{0,b}^{-1}(x)_n=\frac{(b)_n}{n!}
\left( -x_0 + 2\sum_{k=0}^n \frac{(-n)_k}{(1+n)_k} x_k \right), 
\quad n \geq 0. 
\end{equation}
\end{enumerate}

A connection between the $L_{a,b}$ transformation and the binomial
transform is given in Theorem \ref{T410} below. The result in Theorem \ref{T410} significantly generalizes a previously known special case described in Remark \ref{R420}. 

\begin{Theorem}
\label{T410}
Let $x=(x_n)_{n=0}^{\infty}$ be a sequence of complex numbers and
let $\hat{x}=(\hat{x}_n)_{n=0}^{\infty}$ be its binomial transform.
Then if $b \notin \{ 1+a, 2+a, 3+a, \ldots \}$, we have
\begin{equation}
\label{440}
L_{a,b}(\hat{x})_n
=(-1)^n\frac{(1+a-b)_n}{(b)_n}
L_{a,1+a-b}(x)_n. 
\end{equation}
\end{Theorem}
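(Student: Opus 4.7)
The plan is to unfold the left-hand side by inserting the definition of the binomial transform into the definition of $L_{a,b}$, swap the order of summation so that $x_j$ becomes the outer summation variable, and then recognize the resulting inner sum as a terminating ${}_2F_1(1)$ that can be evaluated in closed form by the Chu-Vandermonde identity (\ref{230}). The final step will be a routine Pochhammer-symbol rewrite to recast the answer into the shape of $L_{a,1+a-b}(x)_n$ times the scalar factor $(-1)^n(1+a-b)_n/(b)_n$.

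Concretely, starting from
\begin{equation*}
L_{a,b}(\hat{x})_n = \sum_{k=0}^n \frac{(-n)_k(n+a)_k}{k!(b)_k}\sum_{j=0}^k (-1)^j\binom{k}{j}x_j
\end{equation*}
I would interchange the two sums to obtain
\begin{equation*}
L_{a,b}(\hat{x})_n = \sum_{j=0}^n\frac{(-1)^j x_j}{j!}\sum_{k=j}^n \frac{(-n)_k(n+a)_k}{(k-j)!(b)_k},
\end{equation*}
then reindex with $k=j+m$ and pull out the common factors using $(-n)_{j+m}=(-n)_j(j-n)_m$, $(n+a)_{j+m}=(n+a)_j(n+a+j)_m$, and $(b)_{j+m}=(b)_j(b+j)_m$. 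The inner sum then becomes
\begin{equation*}
\frac{(-n)_j(n+a)_j}{j!(b)_j}\,{}_2F_1\!\left(\left.{\displaystyle j-n,\,n+a+j\atop \displaystyle b+j}\right|1\right),
\end{equation*}
which terminates because $j-n$ is a non-positive integer, and (\ref{230}) evaluates it to $(b-n-a)_{n-j}/(b+j)_{n-j}$.

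Using $(b+j)_{n-j}=(b)_n/(b)_j$, the coefficient of $x_j$ collapses to
\begin{equation*}
\frac{(-1)^j(-n)_j(n+a)_j(b-n-a)_{n-j}}{j!(b)_n}.
\end{equation*}
To finish I would apply the reflection identity $(-z)_m=(-1)^m(z-m+1)_m$ with $z=n+a-b$ and $m=n-j$, giving
\begin{equation*}
(b-n-a)_{n-j}=(-1)^{n-j}(1+a-b+j)_{n-j}=(-1)^{n-j}\frac{(1+a-b)_n}{(1+a-b)_j},
\end{equation*}
where the hypothesis $b\notin\{1+a,2+a,\ldots\}$ guarantees that $(1+a-b)_j\neq 0$ for all $0\le j\le n$. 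Substituting this back, the $(-1)^j$ combines with $(-1)^{n-j}$ to produce $(-1)^n$, the factor $(1+a-b)_n/(b)_n$ comes out of the sum, and what remains over $j$ is precisely $L_{a,1+a-b}(x)_n$, as desired.

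The computation is entirely mechanical once the summation swap is in place; the only point requiring a bit of care is verifying that all Pochhammer symbols appearing in denominators are nonzero under the stated hypotheses (so the manipulations above are legitimate), which reduces to $b\notin\{0,-1,-2,\ldots\}$ (from Definition \ref{D410}) and $b\notin\{1+a,2+a,\ldots\}$ (the hypothesis of the theorem). I expect the bookkeeping for this reflection step to be the only place one could slip a sign, so that is the part I would write out most carefully.
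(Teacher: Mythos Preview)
Your proposal is correct and follows essentially the same route as the paper's own proof: insert the binomial transform, interchange the two finite sums, reindex, evaluate the inner sum by Chu--Vandermonde (\ref{230}), and then rewrite the resulting Pochhammer factors via the reflection identity to expose the form $L_{a,1+a-b}(x)_n$. The only differences are cosmetic (your indices $j,m$ versus the paper's $m,t$, and you spell out the factorizations $(\alpha)_{j+m}=(\alpha)_j(\alpha+j)_m$ a bit more explicitly).
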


\begin{proof}
We have
\begin{eqnarray*}
&&L_{a,b}(\hat{x})_n \\
&&=\sum_{k=0}^n 
\frac{(-n)_k (n+a)_k}{k! (b)_k} \hat{x}_k \\
&&=\sum_{k=0}^n \left(
\frac{(-n)_k (n+a)_k}{k! (b)_k} 
\sum_{m=0}^k (-1)^m \binom{k}{m} x_m \right) \\
&&=\sum_{m=0}^n \sum_{k=m}^n
\frac{(-1)^m(-n)_k (n+a)_k}{(b)_k m! (k-m)!}
x_m \\ 
&&=\sum_{m=0}^n \sum_{t=0}^{n-m}
\frac{(-1)^m(-n)_{m+t} (n+a)_{m+t}}
{(b)_{m+t} m! t!}
x_m \\ 
&&=\sum_{m=0}^n \left(
\frac{(-1)^m(-n)_m (n+a)_m}
{(b)_m m!} \right. \\
&&\left. \times
{}_2F_1 \left( \left.{\displaystyle -(n-m),n+a+m
\atop \displaystyle b+m} \right| 
1\right)
x_m \right). 
\end{eqnarray*}
By Eq.\ (\ref{230}),
\begin{eqnarray*}
&&{}_2F_1 \left( \left.{\displaystyle -(n-m),n+a+m
\atop \displaystyle b+m} \right| 
1\right) \\
&&=\frac{(b-a-n)_{n-m}}{(b+m)_{n-m}} \\ 
&&=\frac{(b-a-n)_{n-m}}{(-1)^{n-m}(1-b-n)_{n-m}} \\
&&=(-1)^{n+m}\frac{(1+a-b)_n(b)_m}
{(1+a-b)_m(b)_n}. 
\end{eqnarray*}
Therefore,
\begin{eqnarray*}
&&L_{a,b}(\hat{x})_n \\
&&=(-1)^n\frac{(1+a-b)_n}{(b)_n} 
\sum_{m=0}^n
\frac{(-n)_m (n+a)_m}
{m! (1+a-b)_m} x_m \\
&&=(-1)^n\frac{(1+a-b)_n}{(b)_n}
L_{a,1+a-b}(x)_n.
\end{eqnarray*}
\end{proof}

We note that (\ref{440}) can also
be written as
\begin{equation}
\label{445}
\sum_{k=0}^n
\frac{(-n)_k(n+a)_k}{k!(b)_k}
\hat{x}_k
=(-1)^n
\frac{(1+a-b)_n}{(b)_n}
\sum_{k=0}^n
\frac{(-n)_k(n+a)_k}{k!(1+a-b)_k}
x_k. 
\end{equation}

\begin{Remark}
\label{R410}
Some authors define the binomial transform
of a sequence 
$(x_n)_{n=0}^{\infty}$
by
\begin{equation}
\label{447}
y_n=\sum_{k=0}^n \binom{n}{k} x_k,
\quad n \geq 0.
\end{equation}
With the definition from above, Equation (\ref{445})
becomes
\begin{equation}
\label{448}
\sum_{k=0}^n
\frac{(-n)_k(n+a)_k}{k!(b)_k}
y_k
=(-1)^n
\frac{(1+a-b)_n}{(b)_n}
\sum_{k=0}^n (-1)^k
\frac{(-n)_k(n+a)_k}{k!(1+a-b)_k}
x_k. 
\end{equation}
\end{Remark}

It is interesting to note that the special case
$b=\frac{1+a}{2}$ in (\ref{440}) yields
\begin{equation}
\label{450}
L_{a,\frac{1+a}{2}}(\hat{x})_n
=(-1)^n L_{a,\frac{1+a}{2}}(x)_n,  
\end{equation}
i.e.
\begin{equation}
\label{460}
\sum_{k=0}^n
\frac{(-n)_k(n+a)_k}
{k!\left(\frac{1+a}{2}\right)_k}
\hat{x}_k
=(-1)^n
\sum_{k=0}^n
\frac{(-n)_k(n+a)_k}
{k!\left(\frac{1+a}{2}\right)_k}
x_k. 
\end{equation}

When $a$ is a positive odd integer in (\ref{450}),
i.e. when $a=2r+1, r \in \{0,1,2,\ldots\}$,
using 
$$\frac{(n+2r+1)_k}{(r+1)_k}
=\binom{n+k+2r}{k+r}\frac{r!}{(n+r+1)_r},$$
we obtain from (\ref{460}) that
\begin{equation}
\label{470}
\sum_{k=0}^n (-1)^k\binom{n}{k}
\binom{n+k+2r}{k+r}\hat{x}_k
=(-1)^n\sum_{k=0}^n (-1)^k\binom{n}{k}
\binom{n+k+2r}{k+r}x_k.
\end{equation}

\begin{Remark}
\label{R420}
Recently, Sun \cite[Theorem 2.1]{SunZW} proved the
special case of (\ref{470}) when $r=0$ and
the binomial transform is defined as in (\ref{447}).
Using generalized Seidel matrices, Chen \cite[Theorem 3.2]{ChenKW}
proved a slightly different form of (\ref{470}), again with
the binomial transform as defined in (\ref{447}). If we
take $m=n=s$ in \cite[Theorem 3.2]{ChenKW}, we obtain the
special case of (\ref{470}) when $r=0$ that is also proven in 
\cite[Theorem 2.1]{SunZW}.
\end{Remark}

\section{Connection to classical hypergeometric orthogonal polynomials}

In this section we show the connection of the inverse of the $L_a$
transformation to classical hypergeometric orthogonal polynomials.
In particular, we use Theorem \ref{T310} to obtain new relations 
for the Wilson, Racah, continuous Hahn, Hahn, and Jacobi polynomials.
We also give the corresponding relations for the special cases of the 
Jacobi polynomials given by the Gegenbauer (or ultraspherical) polynomials, 
the Chebyshev polynomials of the first and second kind, and 
the Legendre (or spherical) polynomials. 
Using Theorem \ref{T410}, we demonstrate further relations for some of 
the orthogonal polynomials. For each type of orthogonal 
polynomials, we start with its definition 
as given in \cite{KS} and then state our relations. 

\subsection{Wilson polynomials}

The Wilson polynomials $W_n(x^2;a,b,c,d)$ are defined by
\begin{eqnarray*}
&&W_n(x^2;a,b,c,d)\\
&&=(a+b)_n(a+c)_n(a+d)_n\\
&&\times
{}_4F_3 \left( \left.{\displaystyle -n,n+a+b+c+d-1,a+ix,a-ix
\atop \displaystyle a+b,a+c,a+d} \right| 
1\right). 
\end{eqnarray*}

By (\ref{310}), if 
$a+b+c+d \notin \{1,0,-1,-2,-3,\dots\}$,
\begin{eqnarray}
\label{510}
&&\sum_{k=0}^n \left( \frac{
(a+b+c+d-1)_k 
\left(\frac{a+b+c+d+1}{2}\right)_k (-n)_k }
{k!  
\left(\frac{a+b+c+d-1}{2}\right)_k (a+b+c+d+n)_k
(a+b)_k(a+c)_k(a+d)_k} \right.\nonumber \\
&&\left. \times W_k(x^2;a,b,c,d) \right) \nonumber \\ 
&&=\frac
{(a+b+c+d)_n(a+ix)_n(a-ix)_n}
{(a+b)_n(a+c)_n(a+d)_n}, 
\quad n \geq 0.
\end{eqnarray}

When $a+b+c+d=1$, using (\ref{320}) and
the fact that
$W_0(x^2;a,b,c,d)=1$, we obtain
\begin{eqnarray}
\label{515}
&&\sum_{k=0}^n \frac{(-n)_k}
{(1+n)_k 
(a+b)_k(a+c)_k(a+d)_k}
W_k(x^2;a,b,c,d) \nonumber \\
&&=\frac
{n!(a+ix)_n(a-ix)_n+(a+b)_n(a+c)_n(a+d)_n}
{2(a+b)_n(a+c)_n(a+d)_n}, 
\quad n \geq 0.
\end{eqnarray}

\subsection{Racah polynomials}

The Racah polynomials are defined by
\begin{eqnarray*}
\label{240}
&&R_n(\lambda(x);\alpha,\beta,\gamma,\delta) \\
&&={}_4F_3 \left( \left.{\displaystyle -n,n+\alpha+\beta+1,-x,x+\gamma+\delta+1
\atop \displaystyle \alpha+1,\beta+\delta+1,\gamma+1} \right| 
1\right), \\
&&n=0,1,2, \ldots, N,  
\end{eqnarray*}
where
\begin{equation*}
\lambda(x)=x(x+\gamma+\delta+1)
\end{equation*}
and
\begin{equation*}
\alpha+1=-N \textrm{ or } \beta+\delta+1=-N \textrm{ or }
\gamma+1=-N, \textrm{ with } N \textrm{ a non-negative integer}.
\end{equation*}

By (\ref{310}), if 
$\alpha + \beta \notin \{-1,-2,-3,\dots\}$,
\begin{eqnarray}
\label{520}
&&\sum_{k=0}^n \frac{ 
(\alpha+\beta+1)_k 
\left(\frac{\alpha+\beta+3}{2}\right)_k (-n)_k}
{k!  
\left(\frac{\alpha+\beta+1}{2}\right)_k (2+\alpha+\beta+n)_k}
R_k(\lambda(x);\alpha,\beta,\gamma,\delta) \\
&&=\frac
{(\alpha+\beta+2)_n(-x)_n(x+\gamma+\delta+1)_n}
{(\alpha+1)_n(\beta+\delta+1)_n(\gamma+1)_n}, 
\quad n \geq 0. \nonumber
\end{eqnarray}

When $\alpha+\beta=-1$, using (\ref{320}) and
the fact that
$R_0(\lambda(x);\alpha,\beta,\gamma,\delta)=1$, we obtain
\begin{eqnarray}
\label{525}
&&\sum_{k=0}^n \frac{(-n)_k}
{(1+n)_k}
R_k(\lambda(x);\alpha,\beta,\gamma,\delta)\\
&&=\frac
{n!(-x)_n(x+\gamma+\delta+1)_n
+(\alpha+1)_n(\beta+\delta+1)_n(\gamma+1)_n}
{2(\alpha+1)_n(\beta+\delta+1)_n(\gamma+1)_n}, 
\quad n \geq 0. \nonumber
\end{eqnarray}

\subsection{Continuous Hahn polynomials}

The continuous Hahn polynomials $p_n(x;a,b,c,d)$ are defined by
\begin{eqnarray*}
&&p_n(x;a,b,c,d)\\
&&=i^n\frac{(a+c)_n(a+d)_n}{n!}\\
&&\times
{}_3F_2 \left( \left.{\displaystyle -n,n+a+b+c+d-1,a+ix
\atop \displaystyle a+c,a+d} \right| 
1\right). 
\end{eqnarray*}

By (\ref{310}), 
if $a+b+c+d \notin \{1,0,-1,-2,-3,\dots\}$,
\begin{eqnarray}
\label{530}
&&\sum_{k=0}^n \left( (-i)^k \frac{ 
(a+b+c+d-1)_k 
\left(\frac{a+b+c+d+1}{2}\right)_k (-n)_k}
{
\left(\frac{a+b+c+d-1}{2}\right)_k (a+b+c+d+n)_k 
(a+c)_k(a+d)_k} \right. \nonumber \\
&&\left. \times p_k(x;a,b,c,d) \right) \nonumber \\
&&=\frac
{(a+b+c+d)_n(a+ix)_n}
{(a+c)_n(a+d)_n}, 
\quad n \geq 0.
\end{eqnarray}

When $a+b+c+d=1$, using (\ref{320}) and
the fact that
$p_0(x;a,b,c,d)=1$, we obtain
\begin{eqnarray}
\label{535}
&&\sum_{k=0}^n (-i)^k \frac{(-n)_k k!}
{(1+n)_k 
(a+c)_k(a+d)_k}
p_k(x;a,b,c,d) \nonumber \\
&&=\frac
{n!(a+ix)_n+(a+c)_n(a+d)_n}
{2(a+c)_n(a+d)_n}, 
\quad n \geq 0.
\end{eqnarray}

By (\ref{230}), the binomial transform
of the sequence
$\left(\frac{(d-ix)_n}{(a+d)_n}\right)_{n=0}^{\infty}$
is the sequence
$\left(\frac{(a+ix)_n}{(a+d)_n}\right)_{n=0}^{\infty}$.
Using this along with (\ref{440}), we obtain
\begin{eqnarray}
\label{537}
&&p_n(x;a,b,c,d)\nonumber\\
&&=i^n\frac{(a+c)_n(a+d)_n}{n!}\nonumber\\
&&\times
{}_3F_2 \left( \left.{\displaystyle -n,n+a+b+c+d-1,a+ix
\atop \displaystyle a+c,a+d} \right| 
1\right)\nonumber\\ 
&&=(-1)^ni^n\frac{(a+d)_n(b+d)_n}{n!}\nonumber\\
&&\times
{}_3F_2 \left( \left.{\displaystyle -n,n+a+b+c+d-1,d-ix
\atop \displaystyle b+d,a+d} \right| 
1\right)\nonumber\\
&&=(-1)^n
p_n(-x;d,c,b,a).
\end{eqnarray}
Combining the above with the trivial invariances of 
$p_n(x;a,b,c,d)$ under interchanging $c$ and $d$,
we obtain eight identities of the form
\begin{equation}
\label{538}
p_n(x;a,b,c,d)
=(-1)^{kn}
p_n((-1)^kx;x_1,x_2,x_3,x_4), 
\end{equation}
where $k\in\{0,1\}$ and $(x_1,x_2,x_3,x_4)$ is a
permutation of $(a,b,c,d)$ such that
$x_1,x_2\in\{a,b\}, x_3,x_4\in\{c,d\}$
if $k=0$, and
$x_1,x_2\in\{c,d\}, x_3,x_4\in\{a,b\}$
if $k=1$.

\subsection{Hahn polynomials}

The Hahn polynomials $Q_n(x;\alpha, \beta,N)$
are defined by
\begin{eqnarray*}
&&Q_n(x;\alpha, \beta,N) \\
&&={}_3F_2 \left( \left.{\displaystyle -n,n+\alpha+\beta+1,-x
\atop \displaystyle \alpha+1,-N}\right| 
1\right), \\
&&n=0,1,2, \ldots, N.  
\end{eqnarray*}

By (\ref{310}), if 
$\alpha+\beta \notin \{-1,-2,-3,\dots\}$,
\begin{eqnarray}
\label{540}
&&\sum_{k=0}^n \frac{
(\alpha+\beta+1)_k 
\left(\frac{\alpha+\beta+3}{2}\right)_k (-n)_k}
{k!  
\left(\frac{\alpha+\beta+1}{2}\right)_k (2+\alpha+\beta+n)_k}
Q_k(x;\alpha, \beta,N) \nonumber \\
&&=\frac
{(\alpha+\beta+2)_n(-x)_n}
{(\alpha+1)_n(-N)_n}, 
\quad n \geq 0.
\end{eqnarray}

When $\alpha+\beta=-1$, i.e. when $\beta=-\alpha-1$,
using (\ref{320}) and
the fact that
$Q_0(x;\alpha, \beta,N)=1$, we obtain
\begin{eqnarray}
\label{545}
&&\sum_{k=0}^n \frac{(-n)_k}
{(1+n)_k}
Q_k(x;\alpha, -\alpha-1,N) \nonumber \\
&&=\frac
{n!(-x)_n
+(\alpha+1)_n(-N)_n}
{2(\alpha+1)_n(-N)_n}, 
\quad n \geq 0.
\end{eqnarray}

By (\ref{230}), the binomial transform
of the sequence
$\left(\frac{(-N+x)_n}{(-N)_n}\right)_{n=0}^{\infty}$
is the sequence
$\left(\frac{(-x)_n}{(-N)_n}\right)_{n=0}^{\infty}$.
Using this along with (\ref{440}), we obtain
\begin{eqnarray}
\label{547}
&&Q_n(x;\alpha, \beta,N) \nonumber\\
&&={}_3F_2 \left( \left.{\displaystyle -n,n+\alpha+\beta+1,-x
\atop \displaystyle \alpha+1,-N}\right| 
1\right) \nonumber\\
&&=(-1)^n\frac{(\beta+1)_n}{(\alpha+1)_n}
{}_3F_2 \left( \left.{\displaystyle -n,n+\alpha+\beta+1,-N+x
\atop \displaystyle \beta+1,-N}\right| 
1\right) \nonumber\\
&&=(-1)^n\frac{(\beta+1)_n}{(\alpha+1)_n}
Q_n(N-x;\beta, \alpha,N).
\end{eqnarray}

\subsection{Jacobi polynomials}

The Jacobi polynomials $P_n^{(\alpha,\beta)}(x)$ are defined by
\begin{eqnarray*}
&&P_n^{(\alpha,\beta)}(x)\\
&&=\frac{(\alpha+1)_n}{n!}
{}_2F_1 \left( \left.{\displaystyle -n,n+\alpha+\beta+1
\atop \displaystyle \alpha+1}\right| 
\frac{1-x}{2}\right). 
\end{eqnarray*}

By (\ref{310}), if 
$\alpha+\beta \notin \{-1,-2,-3,\dots\}$,
\begin{eqnarray}
\label{550}
&&\sum_{k=0}^n \frac{
(\alpha+\beta+1)_k 
\left(\frac{\alpha+\beta+3}{2}\right)_k (-n)_k}
{ 
\left(\frac{\alpha+\beta+1}{2}\right)_k (2+\alpha+\beta+n)_k
(\alpha+1)_k}
P_k^{(\alpha,\beta)}(x) \nonumber \\
&&=\frac
{(\alpha+\beta+2)_n}
{(\alpha+1)_n}
\left(\frac{1-x}{2}\right)^n, 
\quad n \geq 0.
\end{eqnarray}

When $\alpha+\beta=-1$, i.e. when
$\beta=-\alpha-1$, using (\ref{320}) and
the fact that
$P_0^{(\alpha,\beta)}(x)=1$, we obtain
\begin{eqnarray}
\label{555}
&&\sum_{k=0}^n \frac{(-n)_k k!}
{(1+n)_k (\alpha+1)_k}
P_k^{(\alpha,-\alpha-1)}(x) \nonumber \\
&&=\frac
{n!
\left(\frac{1-x}{2}\right)^n
+(\alpha+1)_n}
{2(\alpha+1)_n}, 
\quad n \geq 0.
\end{eqnarray}

By the binomial theorem, the binomial transform
of the sequence
$\left(\frac{1+x}{2}\right)_{n=0}^{\infty}$
is the sequence
$\left(\frac{1-x}{2}\right)_{n=0}^{\infty}$.
Using this along with (\ref{440}), we obtain
\begin{eqnarray*}
&&P_n^{(\alpha,\beta)}(x)\\
&&=\frac{(\alpha+1)_n}{n!}
{}_2F_1 \left( \left. {\displaystyle -n,n+\alpha+\beta+1
\atop \displaystyle \alpha+1} \right| 
\frac{1-x}{2}\right)\\
&&=(-1)^n\frac{(\beta+1)_n}{n!}
{}_2F_1 \left( \left. {\displaystyle -n,n+\alpha+\beta+1
\atop \displaystyle \beta+1} \right|  
\frac{1+x}{2}\right)\\ 
&&=(-1)^nP_n^{(\beta,\alpha)}(-x).
\end{eqnarray*}
The above formula for the Jacobi polynomials is well-known 
(see \cite[p.\ 58]{Sz}).

There are also several special cases of the Jacobi polynomials
that we consider below.

\subsubsection{Gegenbauer (or ultraspherical) polynomials}

The Gegenbauer (or ultraspherical) polynomials 
$C_n^{(\lambda)}(x)$ are defined by 
\begin{eqnarray*}
&&C_n^{(\lambda)}(x)
=\frac{(2\lambda)_n}{(\lambda+\frac{1}{2})_n}
P_n^{(\lambda-\frac{1}{2},\lambda-\frac{1}{2})}(x)\\
&&=\frac{(2\lambda)_n}{n!}
{}_2F_1 \left( \left.{\displaystyle -n,n+2\lambda
\atop \displaystyle \lambda+\frac{1}{2}}\right| 
\frac{1-x}{2}\right), 
\quad \lambda \neq 0. 
\end{eqnarray*}

By (\ref{310}), 
\begin{eqnarray}
\label{5610}
&&\sum_{k=0}^n \frac{  
(1+\lambda)_k (-n)_k}
{ 
(\lambda)_k (1+2\lambda+n)_k}
C_k^{(\lambda)}(x) \nonumber \\
&&=\frac
{(1+2\lambda)_n}
{\left(\lambda+\frac{1}{2}\right)_n}
\left(\frac{1-x}{2}\right)^n, 
\quad n \geq 0.
\end{eqnarray}

\subsubsection{Chebyshev polynomials of the first kind}

The Chebyshev polynomials of the first kind
$T_n(x)$ are defined by 
\begin{eqnarray*}
&&T_n(x)
=\frac{P_n^{(-\frac{1}{2},-\frac{1}{2})}(x)}
{P_n^{(-\frac{1}{2},-\frac{1}{2})}(1)}\\
&&={}_2F_1 \left( \left.{\displaystyle -n,n
\atop \displaystyle \frac{1}{2}}\right| 
\frac{1-x}{2}\right). 
\end{eqnarray*}

Using (\ref{320}) and
the fact that
$T_0(x)=1$, we obtain, 
\begin{eqnarray}
\label{5710}
&&\sum_{k=0}^n \frac{(-n)_k}
{(1+n)_k}
T_k(x) \nonumber \\
&&=\frac
{n!
\left(\frac{1-x}{2}\right)^n
+\left(\frac{1}{2}\right)_n}
{2\left(\frac{1}{2}\right)_n},
\quad n \geq 0.
\end{eqnarray}

\subsubsection{Chebyshev polynomials of the second kind}

The Chebyshev polynomials of the second kind
$U_n(x)$ are defined by 
\begin{eqnarray*}
&&U_n(x)
=(n+1)\frac{P_n^{(\frac{1}{2},\frac{1}{2})}(x)}
{P_n^{(\frac{1}{2},\frac{1}{2})}(1)}\\
&&=(n+1){}_2F_1 \left( \left.{\displaystyle -n,n+2
\atop \displaystyle \frac{3}{2}}\right| 
\frac{1-x}{2}\right). 
\end{eqnarray*}

By (\ref{310}), 
\begin{eqnarray}
\label{5810}
&&\sum_{k=0}^n \frac{  
(2)_k (-n)_k}
{k!(2+n)_k}
U_k(x) \nonumber \\
&&=\frac
{(3)_n}
{\left(\frac{3}{2}\right)_n}
\left(\frac{1-x}{2}\right)^n, 
\quad n \geq 0.
\end{eqnarray}

\subsubsection{Legendre (or spherical) polynomials}

The Legendre (or spherical) polynomials 
$P_n(x)$ are defined by 
\begin{eqnarray*}
&&P_n(x)
=P_n^{(0,0)}(x)\\
&&={}_2F_1 \left( \left.{\displaystyle -n,n+1
\atop \displaystyle 1}\right| 
\frac{1-x}{2}\right). 
\end{eqnarray*}

By (\ref{310}), 
\begin{eqnarray}
\label{5910}
&&\sum_{k=0}^n \frac{  
\left(\frac{3}{2}\right)_k (-n)_k}
{ 
\left(\frac{1}{2}\right)_k (2+n)_k}
P_k(x) \nonumber \\
&&=(n+1)
\left(\frac{1-x}{2}\right)^n, 
\quad n \geq 0.
\end{eqnarray}

\section{Connection to sums that involve terminating hypergeometric series}

In this section we explore how Theorems
\ref{T310} and \ref{T320} lead to
new formulas for sums that involve terminating ${}_4F_3(1)$ hypergeometric series and 
sums that involve terminating ${}_5F_4(1)$ hypergeometric series. 
These new summation formulas are given in 
(\ref{610}), (\ref{620}), (\ref{630}), and (\ref{640}) below.

\subsection{Summations involving ${}_4F_3(1)$ series}

From \cite[Eq.\ 4.3.4]{Ba},
\begin{eqnarray*}
&&{}_7F_6 \left( \left. {\displaystyle 
a,1+\frac{a}{2},b,c,d,e,-n
\atop \displaystyle 
\frac{a}{2},1+a-b,1+a-c,1+a-d,1+a-e,1+a+n} 
\right| 1 \right)\\
&&=\frac{(1+a)_n(1+a-d-e)_n}
{(1+a-d)_n(1+a-e)_n}
{}_4F_3 \left( \left. {\displaystyle 
1+a-b-c,d,e,-n
\atop \displaystyle 
1+a-b,1+a-c,d+e-a-n} 
\right| 1 \right).
\end{eqnarray*}
Hence by (\ref{340}),
if $a \notin \{0, -1, -2, -3, \ldots\}$,
\begin{eqnarray}
\label{610}
&&\sum_{k=0}^n \left(
\frac{(-n)_k(n+a)_k(1+a-d-e)_k}
{k!(1+a-d)_k(1+a-e)_k} \right. \nonumber \\
&&\left.\times
{}_4F_3 \left( \left. {\displaystyle 
1+a-b-c,d,e,-k
\atop \displaystyle 
1+a-b,1+a-c,d+e-a-k} 
\right| 1 \right)\right)\\ 
&&=\frac
{(b)_n(c)_n(d)_n(e)_n}
{(1+a-b)_n(1+a-c)_n(1+a-d)_n(1+a-e)_n}.\nonumber
\end{eqnarray}

Using $n+a$ in place of $b$
and $b$ in place of $a$ in
\cite[Eq.\ 7.6.2.15]{PBM},
we have 
\begin{eqnarray*}
&&{}_5F_4 \left( \left. {\displaystyle 
-n,n+a,b,b+\frac{1}{2},c
\atop \displaystyle 
\frac{a}{2},\frac{1+a}{2},d,1+2b+c-d} 
\right| 1 \right)\\
&&=\frac{(a-2b)_n}{(a)_n}
{}_4F_3 \left( \left. {\displaystyle 
-n,2b,d-c,1+2b-d
\atop \displaystyle 
1+2b-a-n,d,1+2b+c-d} 
\right| 1 \right).
\end{eqnarray*}
Hence by (\ref{310}),
if $a \notin \{0, -1, -2, -3, \ldots\}$,
\begin{eqnarray}
\label{620}
&&\sum_{k=0}^n \left(
\frac{(-n)_k\left(1+\frac{a}{2}\right)_k(a-2b)_k}
{k!(1+a+n)_k\left(\frac{a}{2}\right)_k}
\right.\nonumber\\
&&\left.\times
{}_4F_3 \left( \left. {\displaystyle 
-k,2b,d-c,1+2b-d
\atop \displaystyle 
1+2b-a-k,d,1+2b+c-d} 
\right| 1 \right)\right)\\
&&=\frac
{(1+a)_n(b)_n\left(b+\frac{1}{2}\right)_n(c)_n}
{\left(\frac{a}{2}\right)_n\left(\frac{1+a}{2}\right)_n(d)_n(1+2b+c-d)_n}.\nonumber
\end{eqnarray}

\subsection{Summations involving ${}_5F_4(1)$ series}

From \cite[Eq.\ 4.6.1]{Ba},
\begin{eqnarray*}
&&{}_4F_3 \left( \left. {\displaystyle 
b,c,d,-n
\atop \displaystyle 
1+a-c,1+a-d,1+a+n} 
\right| 1 \right)\\
&&=\frac{(1+a)_n(1+a-c-d)_n}
{(1+a-c)_n(1+a-d)_n}\\
&&\times
{}_5F_4 \left( \left. {\displaystyle 
1+\frac{a-b}{2},\frac{1+a-b}{2},c,d,-n
\atop \displaystyle 
1+\frac{a}{2},1+a-b,\frac{1+a}{2},c+d-a-n} 
\right| 1 \right).
\end{eqnarray*}
Hence by (\ref{340}),
if $a \notin \{0, -1, -2, -3, \ldots\}$,
\begin{eqnarray}
\label{630}
&&\sum_{k=0}^n \left(
\frac{(-n)_k(n+a)_k(1+a-c-d)_k}
{k!(1+a-c)_k(1+a-d)_k} \right. \nonumber \\
&&\left.\times
{}_5F_4 \left( \left. {\displaystyle 
1+\frac{a-b}{2},\frac{1+a-b}{2},c,d,-k
\atop \displaystyle 
1+\frac{a}{2},1+a-b,\frac{1+a}{2},c+d-a-k} 
\right| 1 \right) \right) \\
&&=\frac
{(b)_n\left(\frac{a}{2}\right)(c)_n(d)_n}
{(a)_n\left(1+\frac{a}{2}\right)_n(1+a-c)_n(1+a-d)_n}. \nonumber
\end{eqnarray}

From \cite[Eq.\ 4.6.2]{Ba},
\begin{eqnarray*}
&&{}_5F_4 \left( \left. {\displaystyle 
b,1+\frac{a}{2},c,d,-n
\atop \displaystyle 
\frac{a}{2},1+a-c,1+a-d,1+a+n} 
\right| 1 \right)\\
&&=\frac{(1+a)_n(1+a-c-d)_n}
{(1+a-c)_n(1+a-d)_n}\\
&&\times
{}_5F_4 \left( \left. {\displaystyle 
\frac{a-b}{2},\frac{1+a-b}{2},c,d,-n
\atop \displaystyle 
\frac{a}{2},1+a-b,\frac{1+a}{2},c+d-a-n} 
\right| 1 \right).
\end{eqnarray*}
Hence by (\ref{340}),
if $a \notin \{0, -1, -2, -3, \ldots\}$,
\begin{eqnarray}
\label{640}
&&\sum_{k=0}^n \left(
\frac{(-n)_k(n+a)_k(1+a-c-d)_k}
{k!(1+a-c)_k(1+a-d)_k} \right.\nonumber\\
&&\left.\times
{}_5F_4 \left( \left. {\displaystyle 
\frac{a-b}{2},\frac{1+a-b}{2},c,d,-k
\atop \displaystyle 
\frac{a}{2},1+a-b,\frac{1+a}{2},c+d-a-k} 
\right| 1 \right) \right)\\ 
&&=\frac
{(b)_n(c)_n(d)_n}
{(a)_n(1+a-c)_n(1+a-d)_n}.\nonumber
\end{eqnarray}


\begin{thebibliography}{99}

\bibitem{Ba} W.N. Bailey, Generalized Hypergeometric
Series, Cambridge University Press,
Cambridge, 1935.

\bibitem{ChenKW} K.-W. Chen, Identities from
the binomial transform, J. Number Theory, 
{\bf 124} (2007), no. 1, 142--150.

\bibitem{Knuth} D.E. Knuth, The Art of Computer Programming, vol.\ 3:
Sorting and Searching, Addison-Wesley, 1973. 

\bibitem{KS} R. Koekoek, R. Swarttouw, The Askey-scheme of hypergeometric orthogonal
polynomials and its $q$-analogue, Reports of the Faculty of Technical Mathematics
and Informatics, No. 98-17, Delft University of Technology, Delft, 1998.

\bibitem{PBM} A.P. Prudnikov, Yu.A. Brychkov, O.I. Marichev,
Integrals and Series, Vol. 3, More Special Functions,
Translated from the Russian by G.G. Gould, 
Gordon and Breach Science Publishers, New York, 1990.

\bibitem{Rior} J. Riordan, Combinatorial Identities, John Wiley \& Sons,
New York, 1968.

\bibitem{SunZW} Z.-W. Sun, Congruences involving
$g_n(x)=\sum_{k=0}^n \binom{n}{k}^2 \binom{2k}{k} x^k$,
Ramanujan J., 2015, DOI 10.1007/s11139-015-9727-3.

\bibitem{Sz} G. Szeg\"o, Orthogonal Polynomials,
American Mathematical Society,
New York City, 1939.

\end{thebibliography}
\end{document}